\newtheorem{theorem}{Theorem}
\newtheorem{assumption}{Assumption}
\newtheorem{definition}{Definition}
\newtheorem{example}{Example}
\newtheorem{problem}{Problem}
\newcommand{\Real}{\mathbb R}
\title{\LARGE \bf Dynamically Stable 3D Quadrupedal Walking with Multi-Domain Hybrid System Models and Virtual Constraint Controllers*}
\author{Kaveh Akbari Hamed$^{1}$, Wen-Loong Ma$^{2}$, and  Aaron D. Ames$^{2}$
\thanks{*The work of K. Akbari Hamed is supported by the National Science Foundation (NSF) under Grant Number 1637704. The work of A. D. Ames is supported by the NSF under Grant Numbers 1544332, 1724457, and 1724464 as well as Disney Research LA. The content is solely the responsibility of the authors and does not necessarily represent the official views of the NSF.}
\thanks{$^{1}$K. Akbari Hamed is with the Department of Mechanical Engineering, Virginia Tech, Blacksburg, VA 24061 USA {\tt\small kavehakbarihamed@vt.edu}}
\thanks{$^{2}$W. Ma and A. D. Ames are with the Department of Mechanical and Civil Engineering, California Institute of Technology, Pasadena, CA 91125 USA {\texttt{\small{wma@caltech.edu}} and \texttt{\small{ames@cds.caltech.edu}}}}
}
\begin{document}

\maketitle
\thispagestyle{empty}
\pagestyle{empty}


\begin{abstract}
Hybrid systems theory has become a powerful approach for designing feedback controllers that achieve dynamically stable bipedal locomotion, both formally and in practice. This paper presents an analytical framework 1) to address multi-domain hybrid models of quadruped robots with high degrees of freedom, and 2) to systematically design nonlinear controllers that asymptotically stabilize periodic orbits of these sophisticated models. A family of parameterized virtual constraint controllers is proposed for continuous-time domains of quadruped locomotion to regulate holonomic and nonholonomic outputs. The properties of the Poincar\'e return map for the full-order and closed-loop hybrid system are studied to investigate the asymptotic stabilization problem of dynamic gaits. An iterative optimization algorithm involving linear and bilinear matrix inequalities is then employed to choose stabilizing virtual constraint parameters. The paper numerically evaluates the analytical results on a simulation model of an advanced $3$D quadruped robot, called GR Vision $60$, with $36$ state variables and $12$ control inputs. An optimal amble gait of the robot is designed utilizing the FROST toolkit. The power of the analytical framework is finally illustrated through designing a set of stabilizing virtual constraint controllers with $180$ controller parameters.
\end{abstract}


\vspace{-0.25em}
\section{INTRODUCTION}
\label{Introduction}
\vspace{-0.25em}

This paper establishes an analytical foundation to systematically design nonlinear controllers that asymptotically stabilize periodic orbits for multi-domain hybrid models of $3$D quadruped locomotion with high degrees of freedom. We present a family of virtual constraint controllers that regulate holonomic and nonholonomic outputs for different domains of locomotion. The paper presents a scalable algorithm to design stabilizing controllers for the full-order hybrid models of locomotion rather than simplified ones. The framework can ameliorate specific challenges in the design of nonlinear controllers for hybrid systems of quadruped robots arising from high dimensionality and underactuation.

\vspace{-0.25em}
\subsection{Related Work}
\label{Related Work}
\vspace{-0.25em}

Models of legged locomotion are hybrid with continuous-time domains representing the Lagrangian dynamics and discrete-time transitions representing the change in the physical and unilateral constraints
\cite{Grizzle_Asymptotically_Stable_Walking_IEEE_TAC,Chevallereau_Grizzle_3D_Biped_IEEE_TRO,Ames_RES_CLF_IEEE_TAC,Spong_Controlled_Symmetries_IEEE_TAC,Spong_Passivity_IEEE_RAM,Manchester_Tedrake_LQR_IJRR,Tedrake_L2_gain_ICRA,Guobiao_Zefran_IEEE_ICRA,Gregg_3D_Biped_IEEE_TRO,Gregg_Controlled_Reduction_IEEE_TAC,Byl_Tedrake_Approximate_optimal_control_ICRA,Hamed_Gregg_decentralized_control_IEEE_CST,Hamed_Grizzle_Event_Based_IEEE_TRO,Cheavallereau_Grizzle_RABBIT,Morriz_Grizzle_Hybrid_Invariant_Manifolds_IEEE_TAC,Poulakakis_Grizzle_SLIP_IEEE_TAC,Sreenath_Grizzle_HZD_Walking_IJRR,Collins_Ruina_Tedrake_Wisse_Efficient_Bipedal_Robots,Tedrake_Stochastic_policy_gradient_IROS,Byl01082009,Johnson_Burden_Koditschek,Burden_SIAM,Vasudevan2017,Park_Cheetah,Ioannis_ISS_2017}.
Steady-state walking locomotion can be considered as periodic solutions of  these multi-domain hybrid systems. State-of-the-art controller design methods that address hybrid nature of models of legged locomotion are developed based on hybrid reduction \cite{Ames_HybridReduction_Original_Paper,Ames_Sastry_Hybrid_Reduction_CDC,Ames_Book_Chapter,Gregg_Spong_IJRR}, controlled symmetries \cite{Spong_Controlled_Symmetries_IEEE_TAC}, transverse linearization \cite{Manchester_Tedrake_LQR_IJRR,Shiriaev_Transverse-Linearization_IEEE_TAC}, and hybrid zero dynamics (HZD) \cite{Westervelt_Grizzle_Koditschek_HZD_IEEE_TRO,Morriz_Grizzle_Hybrid_Invariant_Manifolds_IEEE_TAC}. From these methods, transverse linearization and HZD can address underactuation. The notion of HZD was introduced in \cite{Westervelt_Grizzle_Koditschek_HZD_IEEE_TRO} to design feedback controllers that can explicitly accommodate underactuation in bipedal robots and move beyond flat-footed walking gaits arising from the Zero Moment Point (ZMP) criterion \cite{FRI,Vukobratovic_Book}. In the HZD method, a set of output functions, referred to as virtual constraints \cite{Maggiore_Virtual_Constraints_IEEE_TAC,Mohammadi_Automatica}, is defined and enforced by input-output (I-O) linearizing feedback controllers \cite{Isidori_Book}. HZD-based controllers have been validated numerically and experimentally for $2$D and $3$D bipedal robots \cite{Cheavallereau_Grizzle_RABBIT,Martin_Schmiedeler_IJRR,Ames_DURUS_TRO,Ramezani_Hurst_Hamed_Grizzle_ATRIAS_ASME,Park_Grizzle_Finite_State_Machine_IEEE_TRO,Byl_HZD}, $2$D and $3$D powered prosthetic legs \cite{Gregg_Virtual_Constraints_Powered_Prosthetic_IEEE_TRO,multicontactprosthetic_02}, exoskeletons \cite{Grizzle_Decentralized}, monopedal \cite{Poulakakis_Grizzle_SLIP_IEEE_TAC}, and simple (i.e., reduced-order) models of quadruped robots \cite{Ioannis2016}.

\begin{figure}[!t]
\centering
\vspace{0.4em}
\includegraphics[width=2.5in]{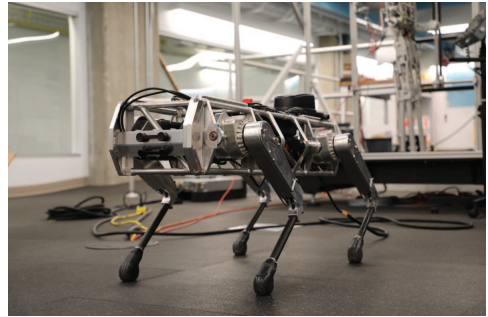}
\caption{Vision $60$, a $3$D quadruped robot with $18$ DOFs, designed and manufactured by Ghost Robotics \cite{Ghost_Robotics}.}
\label{GR_vision_60}
\vspace{-1.0em}
\end{figure}

\vspace{-0.4em}
\subsection{Motivation}
\label{Motivation}
\vspace{-0.25em}

The extension of the HZD approach to design nonlinear controllers for full-order and multi-domain hybrid models of quadruped robots is a significant challenge. In particular, the mechanical systems of these machines may form closed kinematic chains during multi-contact domains of locomotion for which at least two legs are in contact with the ground. This complicates the design procedure of virtual constraints such that 1) the I-O linearization results in a full-rank decoupling matrix and 2) the corresponding zero dynamics manifold becomes nontrivial. In addition to this, we have observed that the proper selection of the virtual constraints (i.e., output functions) can drastically affect the stability properties of walking gaits \cite{Hamed_Buss_Grizzle_BMI_IJRR}. The most basic tool to investigate the stability of hybrid periodic orbits is the Poincar\'e sections analysis \cite{Grizzle_Asymptotically_Stable_Walking_IEEE_TAC,Ioannis_ISS_2017,Haddad_Hybrid_Book}. One drawback of the Poincar\'e sections approach is the lack of closed-form expressions for the Poincar\'e return map that complicates the design of stabilizing virtual constraint controllers. To overcome this difficulty, our previous work \cite{Hamed_Buss_Grizzle_BMI_IJRR,Hamed_Gregg_decentralized_control_IEEE_CST,Hamed_Grizzle_NAHS} presented an iterative optimization algorithm involving bilinear matrix inequalities (BMIs) to systematically choose stabilizing virtual constraints. This algorithm was successfully employed for the centralized/decentralized feedback control design of autonomous bipedal robots with up to $13$ degrees of freedom (DOFs) \cite{Hamed_Buss_Grizzle_BMI_IJRR} and powered prosthetic legs \cite{Hamed_Gregg_decentralized_control_IEEE_CST}. In this paper, we aim to answer these fundamental questions: 1) how can we present holonomic and nonholonomic virtual constraint controllers for single- and multi-contact domains of quadruped locomotion, 2) how can we address the asymptotical stabilization problem of gaits for high-dimensional hybrid models of quadruped robots, and 3) can the BMI algorithm look for optimal and stabilizing HZD-based controllers of quadruped robots in a scalable manner?

\vspace{-0.25em}
\subsection{Goals, Objectives, and Contributions}
\label{Goals, Objectives, and Contributions}
\vspace{-0.25em}

The \textit{primary goal} of this paper is to establish an analytical foundation to 1) address multi-domain hybrid models of quadruped robots with high degrees of freedom, and 2) systematically design HZD-based controllers that stabilize hybrid periodic orbits. This goal will be achieved through the following objectives and \textit{contributions}.
1) A family of parameterized HZD-based controllers is presented for single- and multi-contact domains of quadruped locomotion. These controllers are utilized to zero a combination of parameterized holonomic and nonholonomic outputs; 2) The properties of the full-order Poincar\'e return map are investigated to address the asymptotic stabilization problem of dynamic gaits; 3) The stabilization problem of multi-domain hybrid periodic orbits is then translated into an iterative BMI optimization problem that solves for the virtual constraint parameters; 4) An optimal and dynamic amble gait is designed for an advanced quadruped robot, called Vision 60 (see Fig. \ref{GR_vision_60}), with $36$ state variables and $12$ control inputs. The motion planning algorithm is based on a nonlinear programming problem that is effectively solved using the FROST toolkit \cite{FROTS}; and 5) Analytical results are finally confirmed through designing a stabilizing HZD-based controller with $180$ parameters.


\section{HYBRID MODEL OF LOCOMOTION}
\label{Hybrid Model of Walking}
\vspace{-0.25em}

\subsection{Vision $60$}
\label{GRvision60_section}
\vspace{-0.25em}

Vision $60$ is a mid-sized tele-op and autonomous all-terrain ground drone designed and manufactured by Ghost Robotics \cite{Ghost_Robotics} for research markets (see Fig. \ref{GR_vision_60}). It weighs approximately $20$ (kg) and supports a total payload of $14$ (kg). Vision $60$ has $18$ DOFs of which $12$ leg DOFs are actuated. In particular, each leg of the robot consists of a $1$ DOF actuated knee joint with pitch motion and a $2$ DOF actuated hip joint with pitch and roll motions. The remaining $6$ DOFs are associated with the translational and rotational movements of the torso. The robot can transverse a range of unstructured terrains and substrates, and even stairs. 



\vspace{-0.25em}
\subsection{Robot Model}
\label{Robot Model}
\vspace{-0.25em}

To describe the configuration of the robot, we make use of a floating base coordinates system. For this purpose, let us attach a body frame $R_{b}$ to the base of the robot. The Cartesian coordinates of the origin of this fame with respect to an inertial world frame, denoted by $R_{0}$, can be given by $p_{b}\in\Real^{3}$ (see Fig. \ref{float_base_model}). Furthermore, the orientation of $R_{b}$ with respect to $R_{0}$ is expressed by $\phi_{b}\in\textrm{SO}(3)$. Next let us suppose that $q_{\textrm{body}}\in\mathcal{Q}_{\textrm{body}}$ denotes a set of coordinates to describe the body (shape) of the robot. The generalized coordinates vector is then taken as $q:=\textrm{col}(p_{b},\phi_{b},q_{\textrm{body}})\in\mathcal{Q}:=\Real^{3}\times\textrm{SO}(3)\times\mathcal{Q}_{\textrm{body}}$, where $\mathcal{Q}$ represents the configuration space. For future purposes, we define $n_{q}:=\textrm{dim}(q)$ as the number of degrees of freedom for the floating base model. The state vector of the mechanical system is finally taken as $x:=\textrm{col}(q,\dot{q})\in\textrm{T}\mathcal{Q}$, in which $\textrm{T}\mathcal{Q}$ denotes the tangent bundle of $\mathcal{Q}$.

\begin{figure}[!t]
\centering
\includegraphics[width=2.5in]{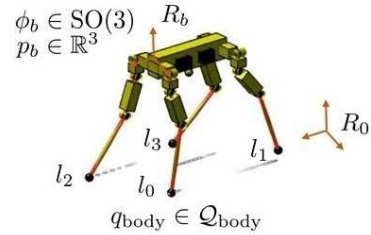}
\vspace{-1em}
\caption{Floating base model of the robot with the associated configuration variables.}
\label{float_base_model}
\vspace{-1.5em}
\end{figure}


\subsection{Hybrid Systems Formulation for Locomotion}
\label{Hybrid Systems Formulation for Walking}

The open-loop hybrid model of quadruped locomotion can be given by the following tuple
\begin{equation}\label{open_loop_hybrid_model}
\mathscr{HL}^{\textrm{ol}}=\left(\Lambda,\mathcal{X},\mathcal{U},\mathcal{D},\mathcal{S},\Delta,FG\right),
\end{equation}
where $\Lambda:=\left(\mathcal{V},\mathcal{E}\right)$ represents a \textit{directed cycle} with the vertices set $\mathcal{V}$ and the edges $\mathcal{E}\subseteq\mathcal{V}\times\mathcal{V}$ (see Fig. \ref{hybrid_model_illustration}). In this formulation, the vertices represent the continuous-time dynamics of locomotion, referred to as \textit{domains} or \textit{phases}. The evolution of the system during each domain is described by ordinary differential equations (ODEs) arising from the Lagrangian dynamics. The edges denote the discrete-time transitions between two continuous-time domains that arise from the change in the number of physical constraints. The discrete-time transitions are further supposed to be instantaneous. In this paper, we assume that $\mu:\mathcal{V}\rightarrow\mathcal{V}$ denotes the \textit{index of the next domain function} for the studied locomotion pattern. Using this notation, the set of edges can be expressed as $\mathcal{E}:=\left\{e=\left(v\rightarrow\mu(v)\right)\right\}_{v\in\mathcal{V}}$. The \textit{set of state manifolds} for the graph \eqref{open_loop_hybrid_model} is represented by $\mathcal{X}:=\left\{\mathcal{X}_{v}\right\}_{v\in\mathcal{V}}$, in which $\mathcal{X}_{v}\subseteq\Real^{2n_{q}}$ denotes the \textit{state manifold} for the vertex (i.e., domain) $v\in\mathcal{V}$. The \textit{set of admissible controls} is also given by $\mathcal{U}:=\left\{\mathcal{U}_{v}\right\}_{v\in\mathcal{V}}$, where $\mathcal{U}_{v}\subseteq\Real^{m}$ represents the \textit{set of admissible control inputs} for the domain $v\in\mathcal{V}$ and some positive integer $m$. We remark that for the Vision $60$, $\mathcal{X}_{v}\subseteq\Real^{36}$ and $\mathcal{U}_{v}\subseteq\Real^{12}$. $\mathcal{D}:=\left\{\mathcal{D}_{v}\right\}_{v\in\mathcal{V}}$ denotes the \textit{set of domain of admissibility}, in which $\mathcal{D}_{v}\subseteq\mathcal{X}_{v}\times\mathcal{U}_{v}$ is a smooth submanifold of $\Real^{2n_{q}}\times\Real^{m}$. $FG:=\{\left(f_{v},g_{v}\right)\}_{v\in\mathcal{V}}$ represents the \textit{set of control systems}, where $\left(f_{v},g_{v}\right)$ is a control system on $\mathcal{D}_{v}$. In particular, the evolution of the continuous-time domain $v\in\mathcal{V}$ is expressed by the input-affine state equation $\dot{x}=f_{v}(x)+g_{v}(x)\,u$ for $(x,u)\in\mathcal{D}_{v}$ with $f_{v}$ and columns of the $g_{v}$ matrix being \textit{smooth} (i.e., $\mathcal{C}^{\infty}$) on $\mathcal{X}_{v}$. The \textit{set of guards} for the hybrid system \eqref{open_loop_hybrid_model} is then represented by $\mathcal{S}:=\left\{\mathcal{S}_{e}\right\}_{e\in\mathcal{E}}$ on which the discrete-time transition $v\rightarrow\mu(v)$ occurs when the state and control trajectories $\left(x(t),u(t)\right)$ intersect the guard $\mathcal{S}_{v\rightarrow\mu(v)}\subset\mathcal{D}_{v}$. $\Delta:=\{\Delta_{e}\}_{e\in\mathcal{E}}$ is finally a set of \textit{reset laws} to describe discrete-time transitions, where $\Delta_{v\rightarrow\mu(v)}$ is a \textit{smooth} discrete-time system represented by $x^{+}=\Delta_{v\rightarrow\mu(v)}\left(x^{-}\right)$ for $v\in\mathcal{V}$. In our notation, $x^{-}(t):=\lim_{\tau\nearrow t}x(\tau)$ and $x^{+}(t):=\lim_{\tau\searrow t}x(\tau)$ denote the left and right limits of the state trajectory $x(t)$, respectively.

\begin{figure}[!t]
\centering
\vspace{0.2em}
\includegraphics[width=3.2in]{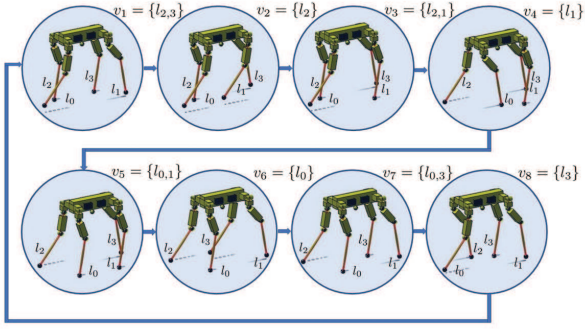}
\vspace{-0.75em}
\caption{Illustration of the multi-domain hybrid model of $3$D amble gait. Continuous-time domains and discrete-time transitions are represented by the vertices and edges of a directed cycle $\Lambda=\left(\mathcal{V},\mathcal{E}\right)$, respectively.}
\label{hybrid_model_illustration}
\vspace{-1.5em}
\end{figure}

\begin{example}[Amble Gait]\label{Amble_gait}
In this example, we consider an eight-domain directed cycle illustrating a typical amble gait shown in Fig. \ref{hybrid_model_illustration}. The legs of the robot are enumerated as $\{0,1,2,3\}$. The directed cycle $\Lambda=\left(\mathcal{V},\mathcal{E}\right)$ for this gait then consists of eight vertices and edges. In particular, $\mathcal{V}=\{l_{2,3},l_{2},l_{2,1},l_{1},l_{0,1},l_{0},l_{0,3},l_{3}\}$ and $\mathcal{E}=\{l_{2,3}\rightarrow{}l_{2},l_{2}\rightarrow{}l_{2,1},l_{2,1}\rightarrow{}l_{1},l_{1}\rightarrow{}l_{0,1},l_{0,1}\rightarrow{}l_{0},l_{0}\rightarrow{}l_{0,3},l_{0,3}\rightarrow{}l_{3},l_{3}\rightarrow{}l_{2,3}\}$,
where $l_{i}$, $i\in\{0,1,2,3\}$ denotes the domains for which the leg $i$ is in contact with the ground. Furthermore, $l_{i,j}$, $i\neq{}j\in\{0,1,2,3\}$ represents the domains in which the legs $i$ and $j$ are simultaneously in contact with the ground.
\end{example}


\vspace{-0.25em}
\subsection{Continuous-Time Dynamics}
\label{Continuous-Time Dynamics}
\vspace{-0.25em}

During the continuous-time domain $v\in\mathcal{V}$, we assume that $\mathcal{C}_{v}$ represents the \textit{indexing set of holonomic constraints} defined on $\mathcal{D}_{v}$. In particular, the holonomic physical constraints are expressed as $\eta_{v}(q)=0$, where $\eta_{v}:=\{\eta_{c}\}_{c\in\mathcal{C}_{v}}\in\Real^{n_{v}}$. The associated velocity constraints can be given by $J_{v}(q)\,\dot{q}=0$, in which $J_{v}(q):=\frac{\partial \eta_{v}}{\partial q}(q)\in\Real^{n_{v}\times{}n_{q}}$ denotes the corresponding Jacobian matrix that is assumed to be full-rank. The evolution of the mechanical system during the continuous-time domain $v$ is then expressed as the following second-order ODE arising from the Euler-Lagrange equations and principle of virtual work
\begin{alignat}{6}
&D(q)\,\ddot{q}+C\left(q,\dot{q}\right)\dot{q}+G(q)&&=B\,u+J_{v}^\top(q)\,\lambda\nonumber\\
&J_{v}(q)\,\ddot{q}+\frac{\partial}{\partial q}\left(J_{v}(q)\,\dot{q}\right)\dot{q}&&=0,\label{EL_equations}
\end{alignat}
where $D(q)\in\Real^{n_{q}\times{n_{q}}}$ denotes the positive definite mass-inertia matrix, $C(q,\dot{q})\,\dot{q}\in\Real^{n_{q}}$ represents the Coriolis and centrifugal terms, and $G(q)\in\Real^{n_{q}}$ contains the gravitational terms. The input distribution matrix and Lagrange multipliers are given by $B\in\Real^{n_{q}\times{}m}$ with the property $\textrm{rank}\,B=m$ and $\lambda\in\Real^{n_{v}}$, respectively. By eliminating the Lagrange multipliers from \eqref{EL_equations}, one can obtain the following \textit{constrained dynamics} for the domain $v$
\begin{equation}\label{constrained_dynamics}
D(q)\,\ddot{q}+F_{v}\left(q,\dot{q}\right)=T_{v}(q)\,u,
\end{equation}
in which $F_{v}:=\textrm{proj}_{v}\,F+J_{v}^\top\,(J_{v}\,D^{-1}\,J_{v}^\top)^{-1}\frac{\partial}{\partial q}(J_{v}\,\dot{q})\dot{q}$, $F=C(q,\dot{q})\,\dot{q}+G(q)$, $T_{v}:=\textrm{proj}_{v}\,B$, and $\textrm{proj}_{v}:=I-J_{v}^\top(J_{v}\,D^{-1}\,J_{v}^\top)^{-1}J_{v}\,D^{-1}$.
The equations of motion in \eqref{constrained_dynamics} can be rewritten in the state equation form $\dot{x}=f_{v}(x)+g_{v}(x)\,u$ for which the state manifold is given by
\begin{equation*}
\mathcal{X}_{v}:=\left\{x=\textrm{col}\left(q,\dot{q}\right)\in\textrm{T}\mathcal{Q}\,|\,\eta_{v}(q)=0,\,J_{v}(q)\,\dot{q}=0\right\}.
\end{equation*}
According to the construction procedure, $\mathcal{X}_{v}$ is forward-invariant under the flow of the state equation. Furthermore, $\textrm{dim}(\mathcal{X}_{v})=2(n_{q}-n_{v})$.


\vspace{-0.25em}
\subsection{Discrete-Time Dynamics}
\label{Discrete-Time Dynamics}
\vspace{-0.25em}

Discrete-time transitions occur when there is a change in physical constraints. If one of the existing contacts breaks during the discrete-transition $e=(v\rightarrow\mu(v))$, the discrete-time dynamics are simply taken as the identity map, i.e., $x^{+}=\Delta_{e}\left(x^{-}\right):=x^{-}$ to preserve the continuity of position and velocity. However, if there is a new contact point, the state of the mechanical system would undergo an abrupt change in the velocity components according to the instantaneous impact model between two rigid bodies \cite{Hurmuzlu_Impact}. More precisely, the conservation of the generalized momentum during the infinitesimal period of the impact results in
\begin{equation}\label{impact}
D(q)\,\dot{q}^{+}-D(q)\,\dot{q}^{-}=J_{\mu(v)}^\top\,\delta\lambda,\,\,\,J_{\mu(v)}(q)\,\dot{q}^{+}=0,
\end{equation}
in which $\dot{q}^{-}$ and $\dot{q}^{+}$ represent the generalized velocity right before and after the impact, respectively, and $\delta\lambda$ denotes the intensity of the impulsive Lagrange multipliers. Using \eqref{impact} and the continuity of position (i.e., $q^{+}=q^{-}$), one can express the discrete-time mapping as $x^{+}=\Delta_{e}(x^{-})$.


\vspace{-0.25em}
\subsection{Solutions and Periodic Orbits}
\label{Solutions and Periodic Orbits}
\vspace{-0.25em}

Solutions of the open-loop hybrid model \eqref{open_loop_hybrid_model} are constructed by piecing together the flows of the continuous-time domains such that the discrete-time transitions occur when the state and control trajectories cross the switching manifolds. To make this concept more precise, we parameterize the solutions by the continuous time $t$ as well as the vertex number $v$ and present the following definition.

\begin{definition}[Solutions]
$(x,u):[0,t_{f})\times\mathcal{V}\rightarrow\mathcal{D}, t_{f}\in\Real_{>0}\cup\{\infty\}$ is said to be a \textit{solution} for \eqref{open_loop_hybrid_model} if
\begin{enumerate}
\item $x(t,v)$ and $u(t,v)$ are right continuous on $[0,t_{f})$ for every $v\in\mathcal{V}$;
\item The left and right limits $x^{-}(t,v):=\lim_{\tau\nearrow t}x(\tau,v)$, $u^{-}(t,v):=\lim_{\tau\nearrow t}u(\tau,v)$, $x^{+}(t,v):=\lim_{\tau\searrow t}x(\tau,v)$, and $u^{+}(t,v):=\lim_{\tau\searrow t}u(\tau,v)$,  exist for every $t\in(0,t_{f})$ and $v\in\mathcal{V}$; and
\item There exists a closed discrete subset $\mathcal{T}:=\{t_{0}<t_{1}<t_{2}<\cdots\}\subset[0,t_{f})$, referred to as the \textit{switching times}, such that (a) for every $(t,v)\in[0,t_{f})\setminus\mathcal{T}\times\mathcal{V}$, $x(t,v)$ is differentiable with respect to $t$, $\frac{\partial x}{\partial t}(t,v)=f_{v}(x(t,v))+g_{v}(x(t,v))\,u(t,v)$, $(x^{-}(t,v),u^{-}(t,v))\notin\mathcal{S}_{v\rightarrow\mu(v)}$, and (b) for $t=t_{j}\in\mathcal{T}$, $(x^{-}(t_{j},v),u^{-}(t_{j},v))\in\mathcal{S}_{v\rightarrow\mu(v)}$, $x^{+}(t_{j},\mu(v))=\Delta_{v\rightarrow\mu(v)}(x^{-}(t_{j},v))$.
\end{enumerate}
\end{definition}

\begin{assumption}[Periodic Solution]\label{periodic_orbit_assumption}
There exist (i) a \textit{nominal} solution $(x^{\star},u^{\star}):[0,\infty)\times\mathcal{V}\rightarrow\mathcal{D}$ to \eqref{open_loop_hybrid_model} and (ii) a \textit{fundamental period} $T^{\star}>0$ such that $x^{\star}(t+T^{\star},v)=x^{\star}(t,v)$  and $u^{\star}(t+T^{\star},v)=u^{\star}(t,v)$ for every $(t,v)\in\Real_{\geq0}\times\mathcal{V}$. The corresponding \textit{periodic orbit} is defined as
\begin{equation*}
\mathcal{O}:=\cup_{v\in\mathcal{V}}\mathcal{O}_{v}:=\left\{x=x^{\star}(t,v)\,|\,(t,v)\in[0,T^{\star})\times\mathcal{V}\right\},
\end{equation*}
where $\mathcal{O}_{v}$ is the projection of $\mathcal{O}$ onto the state manifold $\mathcal{X}_{v}$.
\end{assumption}


\vspace{-0.25em}
\section{FAMILY OF PARAMETERIZED VIRTUAL CONSTRAINT CONTROLLERS}
\label{FAMILY OF PARAMETERIZED VIRTUAL CONSTRAINT CONTROLLERS}
\vspace{-0.25em}

The objective of this section is to present a family of parameterized virtual constraint controllers that asymptotically stabilize dynamic gaits for multi-domain hybrid models of quadruped locomotion. In Section \ref{STABILIZATION PROBLEM}, we will show that the stability of the gaits depends on the proper selection of the virtual constraints and that is the reason to parameterize the constraints by a set of finite-dimensional and adjustable parameters $\xi_{v}\in\Xi_{v}$. Here, $\xi_{v}$ represents the \textit{virtual constraint parameters} for the domain $v\in\mathcal{V}$ and $\Xi_{v}\subset\Real^{p_{v}}$ denotes the corresponding \textit{set of admissible parameters} for some positive integer $p_{v}>2n_{q}$. The virtual constraint controllers are then assumed to be time-invariant and nonlinear state feedback laws, based on input-output linearization \cite{Isidori_Book}, to zero a combination of parameterized holonomic and nonholonomic outputs. The BMI algorithm of Section \ref{BMI ALGORITHM} will optimize these parameters for the asymptotic stabilization of the gait.

Virtual constraints are defined as output functions for the continuous-time domains of hybrid models of walking to coordinate the links of robots within a stride
\cite{Maggiore_Virtual_Constraints_IEEE_TAC,Mohammadi_Automatica,Cheavallereau_Grizzle_RABBIT,Martin_Schmiedeler_IJRR,Ames_DURUS_TRO,Ramezani_Hurst_Hamed_Grizzle_ATRIAS_ASME,Park_Grizzle_Finite_State_Machine_IEEE_TRO,Byl_HZD,Gregg_Virtual_Constraints_Powered_Prosthetic_IEEE_TRO,Gregg_Toward_Biomimetic_Control_IEEE_CST,multicontactprosthetic_02,Grizzle_Decentralized,Poulakakis_Grizzle_SLIP_IEEE_TAC}.
In this paper, for single-contact domains of locomotion with only one lege on the ground, we regulate a holonomic virtual constraint for position tracking purposes. For multi-contact domains with at least two legs in contact with the ground, we propose a combination of holonomic and nonholonomic virtual constraints. The holonomic constraint is again utilized for position tracking purposes. In addition, if there are enough admissible actuators present, the forward velocity of the robot can be controlled as well through zeroing a nonholonomic constraint. The idea of using nonholonomic constraints has been motivated by the velocity modulating outputs in \cite{Ames_Nonholonomic,Ames_Human_Inspired_IEE_TAC,Ames_DURUS_TRO} to regulate the speed of the mechanical system. To present the main idea, we define the concept of a phasing variable.

\begin{assumption}[Phasing Variable]\label{phasing_variable_assumption}
There exists a real-valued function $\tau:\mathcal{X}\times\mathcal{V}\rightarrow\Real$, referred to as the \textit{phasing variable}, which is (i) $\mathcal{C}^{\infty}$ with respect to $x$ and (ii) strictly increasing function of time along the orbit $\mathcal{O}_{v}$ for every $v\in\mathcal{V}$.
\end{assumption}

The phasing variable replaces time, which is a key to obtaining time-invariant controllers that realize orbital stability of $\mathcal{O}$. More precisely, one can express the desired evolution of the state variables on $\mathcal{O}_{v}$ in terms of $\tau$ rather $t$ as $x^{\star}(\tau,v)$. For the purpose of this paper, we assume that phasing variables are taken as holonomic quantities. Now we are in a position to present the virtual constraint controllers for multi- and single-contact domains.


\subsubsection{Multi-Contact Domains}
\label{Multi-Contact Domains}

For multi-contact domains $v\in\mathcal{V}$, we consider a smooth and \textit{parameterized} output function $y_{v}$ to be regulated as follows
\begin{equation}
y_{v}(x,\xi_{v}):=\textrm{col}\left(y_{1v}(x),y_{2v}(x,\xi_{v})\right)\in\Real^{1+m_{2v}},
\end{equation}
where $y_{1v}(x)\in\Real$ and $y_{2v}(x,\xi_{v})\in\Real^{m_{2v}}$ denote the relative degree one and relative degree two portions of the output, respectively, for some positive integer $m_{2v}$. The relative degree one (i.e., nonholonomic) component $y_{1v}$ is chosen to regulate the speed of the robot, i.e.,
\begin{equation}\label{velocity_output}
y_{1v}(x):=s\left(q,\dot{q}\right)-s^{\star}\left(\tau,v\right)\in\Real,
\end{equation}
in which $s(q,\dot{q}):=J_{s}(q)\,\dot{q}$ denotes the forward speed of a point on the robot and $s^{\star}(\tau,v)$ represents the desired evolution of $s$ on the orbit $\mathcal{O}_{v}$ in terms of the phasing variable $\tau$. The relative degree two (i.e., holonomic) component is then defined as the following parameterized output
\begin{equation}
y_{2v}\left(x,\xi_{v}\right):=H_{2v}\left(\xi_{v}\right)\left(q-q^{\star}\left(\tau,v\right)\right)\in\Real^{m_{2v}},
\end{equation}
where $H_{2v}(\xi_{v})\in\Real^{m_{2v}\times{}n_{q}}$ is a parameterized output matrix to be determined and $q^{\star}(\tau,v)$ represents the desired evolution of the configuration variables on $\mathcal{O}_{v}$ in terms of $\tau$. We remark that the output matrix $H_{2v}(\xi_{v})$ is parameterized by the virtual constraint parameters $\xi_{v}\in\Xi_{v}$. One typical way for this parameterization is to assume that $\xi_{v}$ forms the columns of $H_{2v}$, i.e., $\xi_{v}=\textrm{vec}(H_{2v})$, where ``$\textrm{vec}$'' denotes the matrix vectorization operator. The BMI algorithm of Section \ref{BMI ALGORITHM} will look for the optimal parameters $\xi_{v}$ to asymptotically stabilize the gait.

Using standard input-output linearization, the output dynamics become
\begin{equation}
\begin{bmatrix}
\dot{y}_{1v}\\
\ddot{y}_{2v}
\end{bmatrix}=A_{v}\left(x,\xi_{v}\right)u+b_{v}\left(x,\xi_{v}\right),
\end{equation}
where
\begin{alignat}{6}
&A_{v}\left(x,\xi_{v}\right)&&:=\begin{bmatrix}
\textrm{L}_{g_{v}}y_{1v}(x)\\
\textrm{L}_{g_{v}}\textrm{L}_{f_{v}}y_{2v}\left(x,\xi_{v}\right)
\end{bmatrix}\in\Real^{(1+m_{2v})\times{}m}\label{decoupling_matrix_01}\\
&b_{v}\left(x,\xi_{v}\right)&&:=\begin{bmatrix}
\textrm{L}_{f_{v}}y_{1v}(x)\\
\textrm{L}_{f_{v}}^{2}y_{2v}\left(x,\xi_{v}\right)
\end{bmatrix}\in\Real^{1+m_{2v}}.
\end{alignat}
Assuming $m_{2v}<m-1$ and having a full-rank decoupling matrix on an open neighborhood of $\mathcal{O}_{v}$, i.e., $\textrm{rank}\,A_{v}(x,\xi_{v})=1+m_{2v}$, one can employ a nonlinear feedback law $\Gamma_{v}:\mathcal{X}_{v}\times\Xi_{v}\rightarrow\mathcal{U}_{v}$ as follows
\begin{equation}\label{IO_lineariz_controller}
u=\Gamma_{v}\left(x,\xi_{v}\right):=-A_{v}^\top\left(A_{v}\,A_{v}^\top\right)^{-1}\left(b_{v}+w_{v}\right)
\end{equation}
to yield the output dynamics
\begin{equation}\label{output_dyn}
\begin{bmatrix}
\dot{y}_{1v}\\
\ddot{y}_{2v}
\end{bmatrix}=-w_{v}:=-\begin{bmatrix}
k_{p}\,y_{1v}\\
k_{p}\,y_{2v}+k_{d}\,\dot{y}_{2v}
\end{bmatrix}.
\end{equation}
Here, $k_{p}$ and $k_{d}$ are positive PD gains that exponentially stabilize the origin $(y_{1v},y_{2v},\dot{y}_{2v})=(0,0,0)$ for \eqref{output_dyn}. The feedback law \eqref{IO_lineariz_controller} also renders the \textit{parameterized zero dynamics manifold}
$\mathcal{Z}_{v}\left(\xi_{v}\right):=\{x\in\mathcal{X}_{v}\,|\,y_{1v}(x)=0,\,y_{2v}(x,\xi_{v})=\textrm{L}_{f_{v}}y_{2v}(x,\xi_{v})=0\}$
attractive and forward-invariant under the flow of the closed-loop system $\dot{x}=f_{v}^{\textrm{cl}}(x,\xi_{v})$, where $f_{v}^{\textrm{cl}}(x,\xi_{v}):=f_{v}(x)+g_{v}(x)\,\Gamma_{v}(x,\xi_{v})$. According to the construction procedure, $\textrm{dim}(\mathcal{Z}_{v})=2(n_{q}-n_{v}-m_{2v})-1$.


\subsubsection{Single-Contact Domains}
\label{Single-Contact Domains}

For single-contact domains $v\in\mathcal{V}$, we only consider parameterized holonomic output functions to be regulated as follows
\begin{equation*}
y_{v}\left(x,\xi_{v}\right)=y_{2v}\left(x,\xi_{v}\right):=H_{2v}\left(\xi_{v}\right)\left(q-q^{\star}\left(\tau,v\right)\right)\in\Real^{m},
\end{equation*}
for which $\textrm{dim}(y)=\textrm{dim}(u)=m$ and $H_{2v}(\xi_{v})\in\Real^{m\times{}n_{q}}$. Analogous to the analysis for the multi-contact domains, we can show that $\ddot{y}_{2v}=A_{v}(x,\xi_{v})\,u+b_{v}(x,\xi_{v})$, in which $A_{v}\left(x,\xi_{v}\right):=\textrm{L}_{g_{v}}\textrm{L}_{f_{v}}y_{2v}\left(x,\xi_{v}\right)\in\Real^{m\times{}m}$ and $b_{v}\left(x,\xi_{v}\right):=\textrm{L}_{f_{v}}^{2}y_{2v}\left(x,\xi_{v}\right)\in\Real^{m}$. Therefore, the input-output linearizing controller is taken as
\begin{equation}
u=\Gamma_{v}\left(x,\xi_{v}\right):=-A_{v}^{-1}\left(b_{v}+k_{d}\dot{y}_{2v}+k_{p}y_{2v}\right)
\end{equation}
that renders the parameterized zero dynamics manifold $\mathcal{Z}_{v}\left(\xi_{v}\right):=\{x\in\mathcal{X}_{v}\,|\,y_{2v}(x,\xi_{v})=\textrm{L}_{f_{v}}y_{2v}(x,\xi_{v})=0\}$ attractive and forward-invariant under the flow of the closed-loop continuous-time domain $\dot{x}=f_{v}^{\textrm{cl}}(x,\xi_{v})$. We also remark that $\textrm{dim}(\mathcal{Z}_{v})=2(n_{q}-n_{v}-m)$. For future purposes, we suppose that the family of parameterized controllers $\Gamma:=\{\Gamma_{v}\}_{v\in\mathcal{V}}$ satisfies the following assumption.

\begin{assumption}[Nominal Parameters]
There exist \textit{nominal} controller parameters $\xi_{v}^{\star}\in\Xi_{v}$, $v\in\mathcal{V}$ such that
\begin{equation}
\Gamma_{v}\left(x^{\star}\left(t,v\right),\xi^{\star}_{v}\right)=u^{\star}\left(t,v\right),\quad\forall{}t\in[0,T^{\star}),\,v\in\mathcal{V}.
\end{equation}
\end{assumption}


\section{STABILIZATION PROBLEM}
\label{STABILIZATION PROBLEM}
\vspace{-0.25em}

The objective of this section is to address the asymptotic stabilization problem of periodic gaits for the hybrid model of quadruped locomotion. We make use of the Poincar\'e sections analysis for the orbital stability of gaits.

Let $\varphi_{v}(t;x_{0},\xi_{v})$ denote the unique solution of the closed-loop ODE $\dot{x}=f_{v}^{\textrm{cl}}(x,\xi_{v})$, $v\in\mathcal{V}$ with the initial condition $x(0)=x_{0}$ for all $t\geq0$ in the maximal interval of existence. For the closed-loop system, since the control laws $u=\Gamma_{v}(x,\xi_{v})$ are already determined, one can analyze the discrete-time transitions $v\rightarrow\mu(v)$ on a set of reduced-order and parameterized switching manifolds $\hat{\mathcal{S}}_{v\rightarrow\mu(v)}(\xi_{v})\subset\mathcal{X}_{v}$ rather than the original ones $\mathcal{S}_{v\rightarrow\mu(v)}\subset\mathcal{X}_{v}\times\mathcal{U}_{v}$. Here, $\hat{\mathcal{S}}_{v\rightarrow\mu(v)}(\xi_{v})$ represents the set of all points $x\in\mathcal{X}_{v}$ for which $(x,u)=(x,\Gamma_{v}(x,\xi_{v}))\in\mathcal{S}_{v\rightarrow\mu(v)}$.

\begin{assumption}\label{regaularity}
For every $v\in\mathcal{V}$ and all $\xi_{v}\in\Xi_{v}$, $\hat{\mathcal{S}}_{v\rightarrow\mu(v)}(\xi_{v})$ is an embedded submanifold of $\mathcal{X}_{v}$ with the property $\textrm{dim}(\hat{\mathcal{S}}_{v\rightarrow\mu(v)}(\xi_{v}))=\textrm{dim}(\mathcal{X}_{v})-1$.
\end{assumption}

We now define the \textit{time-to-switching function} for the domain $v\in\mathcal{V}$ as the first time at which the state solution $\varphi_{v}(t;x_{0},\xi_{v})$ intersects the switching manifold $\hat{\mathcal{S}}_{v\rightarrow\mu(v)}$, i.e.,
$T_{v}\left(x_{0},\xi_{v}\right):=\inf\{t>0\,|\,\varphi_{v}(t;x_{0},\xi_{v})\in\hat{\mathcal{S}}_{v\rightarrow\mu(v)}(\xi_{v})\}$.  The \textit{generalized Poincar\'e map} for the domain $v\in\mathcal{V}$ is then defined as the flow of the closed-loop domain $v\in\mathcal{V}$ evaluated on $\hat{\mathcal{S}}_{v\rightarrow\mu(v)}(\xi_{v})$, i.e., $P_{v}:\mathcal{X}_{\mu^{-1}(v)}\times\Xi_{v}\rightarrow\hat{\mathcal{S}}_{v\rightarrow\mu(v)}$ and
\begin{equation*}
P_{v}\left(x,\xi_{v}\right)\!:=\!\varphi_{v}\left(T_{v}\left(\Delta_{\mu^{-1}(v)\rightarrow{v}}(x),\xi_{v}\right)\!;\!\Delta_{\mu^{-1}(v)\rightarrow{v}}(x),\xi_{v}\right)\!.
\end{equation*}
Next, let us take the parameters vector as $\xi:=\textrm{col}(\xi_{v_{1}},\xi_{v_{2}},\cdots,\xi_{v_{N}})\in\Xi\subset\Real^{p}$, where $N:=|\mathcal{V}|$ denotes the cardinal number of $\mathcal{V}$, $\Xi:=\Xi_{v_{1}}\times\Xi_{v_{2}}\times\cdots\Xi_{v_{N}}$, and $p:=\sum_{v\in\mathcal{V}} p_{v}$. The nominal parameters are also shown by $\xi^{\star}$. Suppose further that
$\omega:=\left\{v_{1},\mu\left(v_{1}\right),\mu^{2}\left(v_{1}\right),\cdots,\mu^{N-1}\left(v_{1}\right)\right\}$ represents the executed sequence of the vertices for the desired locomotion pattern $\mathcal{O}$, in which $\mu^{k}(v_{1}):=\mu(\mu^{k-1}(v_{1}))$ for $k=1,2,\cdots$ and $\mu^{0}(v_{1}):=v_{1}$. We remark that according to the periodicity of the desired gait, $\mu^{N}(v_{1})=v_{1}$. The \textit{full-order Poinca\'e return map} is finally taken as the composition of the generalized maps $P_{v}$ along the switching path $\omega$, i.e.,
\begin{alignat}{6}
& &&P\left(x,\xi\right):=\nonumber\\
& &&P_{v_{1}}\!\left(\!P_{\mu^{N-1}(v_{1})}\!\left(\cdots\!\left(P_{\mu(v_{1})}\left(x,\xi_{\mu(v_{1})}\right)\right)\!\cdots\!,\xi_{\mu^{N-1}(v_{1})}\right)\!,\xi_{v_{1}}\right)\!.\nonumber
\end{alignat}
The evolution of the closed-loop hybrid model on the Poincar\'e section $\hat{\mathcal{S}}_{v_{1}\rightarrow\mu(v_{1})}(\xi_{1})$ can then be described by the following discrete-time system
\begin{equation}\label{poincare_map}
x[k+1]=P\left(x[k],\xi\right),\quad k=0,1,2,\cdots,
\end{equation}
in which $k$ represents the step number.

\begin{assumption}[Transversality]\label{transversality_assumption}
We assume that the orbit $\mathcal{O}_{v}$ is transversal to the switching manifold $\hat{\mathcal{S}}_{v\rightarrow\mu(v)}(\xi_{v})$ for all $v\in\mathcal{V}$ and $\xi_{v}\in\Xi_{v}$. In particular, $\overline{\mathcal{O}}_{v}\cap\hat{\mathcal{S}}_{v\rightarrow\mu(v)}(\xi_{v})$ is a singleton for all controller parameters $\xi_{v}$. In our notation, $\overline{\mathcal{O}}_{v}$ denotes the set closure of $\mathcal{O}_{v}$.
\end{assumption}

\begin{theorem}[Poincar\'e Map]\label{Poincare_Map_Property}
Suppose Assumptions \ref{periodic_orbit_assumption}-\ref{transversality_assumption} are satisfied. Then, there exist (i) a subset $\hat{\Xi}\subset\Xi$ and (ii) a fixed point of $P(.,\xi):\hat{\mathcal{S}}_{v_{1}\rightarrow\mu(v_{1})}(\xi_{v_{1}})\rightarrow\hat{\mathcal{S}}_{v_{1}\rightarrow\mu(v_{1})}(\xi_{v_{1}})$, represented by $x_{1}^{\star}$, that is invariant under the choice of the controller parameters $\xi\in\hat{\Xi}$. That is, $x_{1}^{\star}\in\hat{\mathcal{S}}_{v_{1}\rightarrow\mu(v_{1})}(\xi_{v_{1}})$ and $P(x_{1}^{\star},\xi)=x_{1}^{\star}$ for all $\xi\in\hat{\Xi}$. Furthermore, $\Psi(\xi):=\frac{\partial P}{\partial x}(x_{1}^{\star},\xi)$ is well-defined and differentiable with respect to $\xi$ on $\hat{\Xi}$.
\end{theorem}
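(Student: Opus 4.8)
The plan is to exploit the defining feature of the virtual constraints, namely that they are built from the \emph{fixed} desired profiles $q^\star(\tau,v)$ and $s^\star(\tau,v)$ while the tunable parameters $\xi_v$ enter only through the output matrix $H_{2v}(\xi_v)$. The first and central step is to prove that both the outputs and the feedback reduce to their nominal values along the orbit, uniformly in $\xi$. Along $\mathcal{O}_v$ one has $q=q^\star(\tau,v)$ and $s(q,\dot q)=s^\star(\tau,v)$ by Assumption \ref{phasing_variable_assumption}, so $y_{1v}\equiv0$ and $y_{2v}(\cdot,\xi_v)=H_{2v}(\xi_v)\,(q-q^\star(\tau,v))\equiv0$ for \emph{every} $\xi_v$; differentiating along the flow and using the relative-degree-two property $\textrm{L}_{g_v}y_{2v}=0$ gives $\textrm{L}_{f_v}y_{2v}\equiv0$ as well. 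Hence $\mathcal{O}_v\subset\mathcal{Z}_v(\xi_v)$ for all $\xi_v$, and on $\mathcal{O}_v$ the correction term vanishes, $w_v=0$. I would then show that the feedforward $-A_v^\top(A_vA_v^\top)^{-1}b_v$ (resp.\ $-A_v^{-1}b_v$) produced by \eqref{IO_lineariz_controller} coincides with $u^\star$ on the orbit: since $\dot y_{1v}=\ddot y_{2v}=0$ along the nominal flow, the nominal input satisfies $A_v u^\star+b_v=0$ on $\mathcal{O}_v$, so for single-contact domains invertibility of $A_v$ forces $\Gamma_v(x^\star,\xi_v)=u^\star$ at once, while for multi-contact domains the minimum-norm inverse gives $\Gamma_v(x^\star,\xi_v)=A_v^\top(A_vA_v^\top)^{-1}A_v\,u^\star$, the orthogonal projection of $u^\star$ onto the row space of $A_v(x^\star,\xi_v)$.

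This Key Lemma, that $\Gamma_v(x^\star(t,v),\xi_v)=u^\star(t,v)$ for all $v$ and all admissible $\xi_v$, is the crux, and I expect the multi-contact (underactuated-output) case to be the \emph{main obstacle}: there the projection equals $u^\star$ only when $u^\star$ lies in the row space of $A_v(x^\star,\xi_v)$, which holds at $\xi^\star$ by the Nominal Parameters assumption but must be shown to persist under variation of $\xi_v$. Since the parameters enter only through $H_{2v}$, this row space is $\textrm{rowspace}(\textrm{L}_{g_v}y_{1v})+\textrm{rowspace}(H_{2v}\,\textrm{L}_{g_v}\textrm{L}_{f_v}\psi)$ with $\psi:=q-q^\star(\tau,v)$, and I would use exactly the persistence of this subspace (together with the full-rank decoupling requirement already imposed in Section \ref{FAMILY OF PARAMETERIZED VIRTUAL CONSTRAINT CONTROLLERS}) to \emph{define} the admissible set $\hat\Xi$. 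Granting the lemma, the closed-loop field $f_v^{\textrm{cl}}(\cdot,\xi_v)=f_v+g_v\Gamma_v(\cdot,\xi_v)$ reproduces $\dot x^\star=f_v(x^\star)+g_v(x^\star)u^\star$ along the orbit, so $\mathcal{O}$ is invariant under the closed-loop flow for every $\xi\in\hat\Xi$ and the solution on $\mathcal{O}$ is $\xi$-independent.

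With orbit invariance the fixed point is immediate. By the Transversality Assumption \ref{transversality_assumption}, $\overline{\mathcal{O}}_{v_1}\cap\hat{\mathcal{S}}_{v_1\rightarrow\mu(v_1)}(\xi_{v_1})$ is a singleton; because the crossing occurs at a nominal state with the nominal input and $(x^\star,u^\star)\in\mathcal{S}_{v_1\rightarrow\mu(v_1)}$, the Key Lemma shows $(x^\star,\Gamma_{v_1}(x^\star,\xi_{v_1}))=(x^\star,u^\star)\in\mathcal{S}_{v_1\rightarrow\mu(v_1)}$, so this intersection is the \emph{same} state $x_1^\star$ for every admissible $\xi_{v_1}$ and $x_1^\star\in\hat{\mathcal{S}}_{v_1\rightarrow\mu(v_1)}(\xi_{v_1})$. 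Composing the generalized maps $P_v$ along the switching path $\omega$ and using that each segment of $\mathcal{O}$ is traversed identically for all $\xi$ (invariance under both the flows and the reset maps, which agree with the nominal transitions on $\mathcal{O}$), the trajectory launched from $x_1^\star$ returns to $x_1^\star$ after one fundamental period $T^\star$, giving $P(x_1^\star,\xi)=x_1^\star$ for all $\xi\in\hat\Xi$, as claimed.

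It remains to establish regularity of the return map. Here I would invoke smooth dependence of ODE solutions on initial data and parameters: since each $f_v^{\textrm{cl}}(x,\xi_v)$ is $\mathcal{C}^\infty$ in $(x,\xi_v)$, the flow $\varphi_v(t;\cdot,\cdot)$ is smooth; transversality together with Assumption \ref{regaularity} lets the implicit function theorem produce a smooth time-to-switching $T_v(\cdot,\xi_v)$ on a neighborhood of the crossing; and the reset maps $\Delta_e$ are smooth by hypothesis. Consequently each $P_v$ is $\mathcal{C}^\infty$ in $(x,\xi_v)$ near the relevant crossing, and the composition $P$ is $\mathcal{C}^\infty$ in $(x,\xi)$ on a neighborhood of $(x_1^\star,\xi)$ for every $\xi\in\hat\Xi$. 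Being a partial derivative of a jointly smooth map evaluated at the $\xi$-independent fixed point, $\Psi(\xi)=\frac{\partial P}{\partial x}(x_1^\star,\xi)$ is therefore well-defined and differentiable (indeed smooth) in $\xi$ on $\hat\Xi$. I would close by noting that $\hat\Xi$ is nonempty, containing $\xi^\star$, and, being cut out by open conditions (full-rank decoupling, persistence of the row space, transversal crossings, complete flows), contains a neighborhood of $\xi^\star$, which is all the subsequent BMI synthesis requires.
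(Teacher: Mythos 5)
Your proposal is correct and follows essentially the same route as the paper's proof: because the parameterized outputs vanish identically along $\mathcal{O}$ for every $\xi$ (the parameters enter only through $H_{2v}$ multiplying $q-q^{\star}(\tau,v)$), the I-O linearizing feedback reproduces $u^{\star}$ on the orbit, which makes $\mathcal{O}$ invariant under the closed-loop flow and, via the singleton intersection in the transversality assumption, makes the fixed point $x_{1}^{\star}$ parameter-invariant, with well-definedness and differentiability of $\Psi(\xi)$ then following from smooth dependence of flows on $(x,\xi)$ and the implicit function theorem applied to the time-to-switching functions. One refinement worth noting on your flagged multi-contact obstacle: the row-space persistence you build into $\hat{\Xi}$ is in fact automatic near $\xi^{\star}$, since every row of $A_{v}$ factors through $D^{-1}T_{v}$, so $\ker A_{v}=\ker T_{v}$ is independent of $\xi_{v}$ whenever the decoupling matrix has full rank, and hence $u^{\star}\perp\ker A_{v}(x^{\star},\xi_{v})$ holds for all such $\xi_{v}$ once the nominal-parameters assumption gives it at $\xi_{v}^{\star}$ --- this also repairs your closing openness claim, because containment of $u^{\star}$ in a varying row space is not by itself an open condition, whereas full rank of $A_{v}$ is.
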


\begin{proof}
The proof is available online\footnote{\url{https://github.com/kavehakbarihamed/ProofofTh1/blob/master/proof.pdf}}.
\end{proof}

\begin{problem}[Asymptotic Stabilization]\label{asymptotic_stability_problem}
The asymptotic (exponential) stabilization problem consists of finding the controller parameters $\xi\in\hat{\Xi}$ such that the eigenvalues of the Jacobian matrix $\Psi(\xi)$ lie inside the unit circle.
\end{problem}

To solve Problem \ref{asymptotic_stability_problem}, one would need to find 1) a controller parameter $\xi\in\hat{\Xi}$, 2) a positive definite matrix $W$, and 3) a scalar $\gamma$ such that the increment of the Lyapunov function $V(\delta{}x):=\delta{}x^\top{}W^{-1}\delta{x}$ along the linearized discrete-time system $\delta{}x[k+1]=\Psi(\xi)\,\delta{}x[k]$ becomes negative definite, i.e., $\delta{}V[k]:=V[k+1]-V[k]<-\gamma\,V[k]$, where $\delta{}x[k]:=x[k]-x_{1}^{\star}$. This is equivalent to solving the following nonlinear matrix inequality (NMI)
\begin{equation}\label{NMI}
\begin{bmatrix}
W & \Psi(\xi)\,W\\
\star & (1-\gamma)\,W
\end{bmatrix}>0.
\end{equation}


\section{ITERATIVE BMI ALGORITHM}
\label{BMI ALGORITHM}
\vspace{-0.25em}

The objective of this section is to employ an iterative BMI algorithm to look for the controller parameters $\xi$ that solve the NMI \eqref{NMI}. The BMI algorithm was developed in \cite{Hamed_Buss_Grizzle_BMI_IJRR,Hamed_Gregg_decentralized_control_IEEE_CST} to systematically design centralized and decentralized nonlinear control algorithms for bipedal locomotion. Here, we employ the algorithm to design nonlinear controllers for higher-dimensional hybrid systems that describe quadruped locomotion. The objective of the BMI algorithm is to generate a sequence of controller parameters $\{\xi^{\ell}\}$ in an offline manner that would eventually solve the NMI \eqref{NMI}, where the superscript $\ell=0,1,\cdots$ represents the iteration number. The algorithm includes three main steps as follows.

\noindent\textit{Step 1 (Sensitivity Analysis):} During the iteration $\ell=0,1,\cdots$, the Jacobian matrix $\Psi(\xi^{\ell}+\Delta\xi)$ is approximated using the first-order Taylor series expansion, i.e.,
\begin{alignat}{6}
&\Psi\left(\xi^{\ell}+\Delta\xi\right)&&\approx{}\Psi\left(\xi^{\ell}\right)+\bar{\Psi}\left(\xi^{\ell}\right)\left(I\otimes\Delta\xi\right)=:\hat{\Psi}\left(\xi^{\ell},\Delta\xi\right)\nonumber.
\end{alignat}
Here, ``$\otimes$'' denotes the Kronecker product, $\bar{\Psi}(\xi^{\ell})$ represents the \textit{sensitivity matrix} during the iteration $\ell$, and $\hat{\Psi}(\xi^{\ell},\Delta\xi)$ denotes the first-order approximation of $\Psi(\xi^{\ell}+\Delta\xi)$ for sufficiently small $\Delta\xi\in\Real^{p}$. Our previous work has presented a systematic and effective numerical approach based on the variational equation in \cite[Theorems 1 and 2]{Hamed_Buss_Grizzle_BMI_IJRR} to compute the sensitivity matrix. We remark that the approximate Jacobian matrix $\hat{\Psi}(\xi^{\ell},\Delta\xi)$ is affine in terms of $\Delta\xi$ which will reduce the NMI \eqref{NMI} into a BMI in Step 2.

\noindent\textit{Step 2 (BMI Optimization):} The objective of Step 2 is to translate the NMI \eqref{NMI} into a BMI optimization problem that can be effectively solved using available solvers, e.g., PENBMI \cite{PENBMI} from TOMLAB \cite{TOMLAB}. In particular, we are interested in solving the following BMI optimization problem during the iteration $\ell$
\begin{alignat}{4}
&\min_{(W,\Delta\xi,\zeta,\gamma)}&&-w\,\gamma+\zeta\label{BMi_cost}\\
& \textrm{s.t.} &&\begin{bmatrix}
W & \hat{\Psi}\left(\xi^{\ell},\Delta\xi\right)W\\
\star & \left(1-\gamma\right)W
\end{bmatrix}>0\label{BMI}\\
& &&\begin{bmatrix}
I & \Delta\xi\\
\star & \zeta
\end{bmatrix}>0\label{LMI}\\
& &&\gamma>0\label{positive_gamma},
\end{alignat}
where \eqref{BMI} represents a BMI condition in terms of the decision variable. From Schur complement lemma, the linear matrix inequality (LMI) \eqref{LMI} introduces the dynamic upper bound $\zeta$ on $\|\Delta\xi\|_{2}^{2}$, i.e., $\zeta>\|\Delta\xi\|_{2}^{2}$. The cost function \eqref{BMi_cost} finally tries to minimize a linear combination of the convergence rate $\gamma$ and the dynamic bound $\zeta$. Here, $w>0$ is a weighting factor as  a tradeoff between improving the convergence rate or minimizing $\|\Delta\xi\|$ to have a good approximation based on the first-order Taylor series expansion.

\noindent\textit{Step 3 (Iteration):} Let $(W^{\star},\Delta\xi^{\star},\zeta^{\star},\gamma^{\star})$ represent a \textit{local} minimum (\textit{not} necessarily the global solution) for the BMI optimization problem \eqref{BMi_cost}-\eqref{positive_gamma}. Step 3 updates the controller parameters for the next iteration as $\xi^{\ell+1}=\xi^{\ell}+\Delta\xi^{\star}$. If the requirement of Problem \ref{asymptotic_stability_problem} is satisfied at $\xi=\xi^{\ell+1}$, the algorithm is successful and stops. Otherwise, it continuous by coming back to Step 1 around $\xi=\xi^{\ell+1}$ and going through the next steps. If the BMI problem in Step 2 is not feasible, the algorithm is not successful and stops. Sufficient conditions for the convergence of the algorithm to stabilizing solutions have been presented in \cite[Theorem 2]{Hamed_Gregg_decentralized_control_IEEE_CST}.


\section{NUMERICAL SIMULATIONS}
\label{NUMERICAL SIMULATIONS}
\vspace{-0.25em}

The objective of this section is to numerically validate the analytical results on a simulated model of the Vision $60$ robot.


\subsection{FROST}
\label{FROST}
\vspace{-0.25em}

We consider an amble gait for the robot as illustrated in Fig. \ref{hybrid_model_illustration}. To generate the gait, we make use of FROST (Fast Robot Optimization and Simulation Toolkit) --- an open-source MATLAB toolkit for developing model-based control and planning of dynamic legged locomotion \cite{FROTS}. FROST provides an efficient trajectory optimization framework for nonlinear hybrid dynamical systems. It uses the Hermite-Simpson collocation method to translate a trajectory planning problem into a traditional constrained nonlinear programming problem (NLP):
\begin{equation}
\textrm{argmin}\sum_{v\in\mathcal{V}} \sum_{i=1}^{N^{v}}\mathcal{L}_{v}(.)\, \delta t+E_{v}(.)
\end{equation}
subject to 1) equality constraints formed by the implicit Runge-Kutta method, and 2) inequality constraints arising from feasibility and physical limitations. In our notation, $\mathcal{L}_{v}(.)$ and $E_{v}(.)$ are the running and terminal costs for the domain $v\in\mathcal{V}$, respectively. In addition, $N^{v}$ represents the total number of grids for the domain $v$. FROST systematically translates the NLP problem into state-of-the-art solvers, such as IPOPT or SNOPT. In our problem, we considered a symmetric and periodic amble gait. The left-right symmetry reduces the motion planning problem of the eight-domain hybrid system into that of a four-domain hybrid system consisting of $344$ decision variables with $583$ constraints. Using a Ubuntu laptop with an i$7- 6820$HQ CPU $@\ 2.70$GHz and $16$GB RAM, it took $85$ seconds ($525$ iterations) for the FROST to find an optimal amble gait.


\subsection{BMI Algorithm}
\label{BMI Algorithm_num}
\vspace{-0.25em}

PENBMI is a general-purpose solver for BMIs which guarantees the convergence to a local optimal point satisfying the Karush Kuhn Tucker optimality conditions \cite{PENBMI}. To solve the BMI optimization problem \eqref{BMi_cost}-\eqref{positive_gamma}, we make use of the PENBMI solver from TOMLAB \cite{TOMLAB} integrated with the MATLAB environment through YALMIP \cite{YALMIP}.

\textit{Asymptotic Stabilization Problem:} Vision $60$ has $m=12$ actuators. For multi-contact domains $v\in\mathcal{V}$ of the amble gait, we consider a combination of relative degree one and two outputs whose holonomic portion is chosen as $10$ dimensional, i.e., $y_{v}:=\textrm{col}(y_{1v},y_{2v})\in\Real^{1+m_{2v}}$, $m_{2v}=10$, and $H_{2v}(\xi_{v})\in\Real^{10\times18}$. In particular, we have observed that for $m_{2v}=11$, the decoupling matrix $A_{v}$ in \eqref{decoupling_matrix_01} cannot be full-rank, and hence, we choose $m_{2v}=10$. For single-contact domains $v\in\mathcal{V}$, we take a $12$-dimensional output function to be regulated, i.e., $y_{v}:=y_{2v}\in\Real^{m_{2v}}$, $m_{2v}=m=12$, and $H_{2v}(\xi_{v})\in\Real^{12\times18}$. The phasing variable is also taken as the horizontal displacement of the robot along the waking direction (i.e., $x$-axis). We start with an initial set of output matrices $H_{2v}$, $v\in\mathcal{V}$ based on physical intuition. More specifically, for single-contact domains, we assume that the controlled variables are taken as the actuated body joints, i.e., $H_{2v}\,q=q_{\textrm{body}}$. For multi-contact domains, however, we remove the two rows of the $H_{2v}$ matrix that correspond to the knee and hip pitch angles of a contacting (i.e., stance) leg. The nonholonomic portion, i.e., $y_{1v}$, is then defined as \eqref{velocity_output} to regulate the forward velocity of the hip joint of the same leg. For this choice of virtual constraints, the dominant eigenvalues and spectral radius of the Jacobian matrix $\Psi(\xi^{0})$ become $\{-1.0350,-1.000,-0.1811\pm 0.8766i\}$ and $1.0350$, respectively. Hence, the gait is not stable. Starting with this set of parameters, we employ the iterative BMI algorithm of Section \ref{BMI ALGORITHM} with the weighting factor $w=0.1$. For the purpose of this paper, we only look for the optimal output matrix $H_{2v}$ during the domain $v_{1}$ with $10\times18=180$ parameters. We can apply this algorithm for computing other matrices as well. The algorithm successfully converges to a set of stabilizing parameters after four iterations, where each iteration takes approximately $30$ minutes on a Windows laptop with an i$9-8950$HK CPU $@\ 2.90$GHz and $32$GB RAM. For the BMI-optimized solution, the dominant eigenvalues and spectral radius of the Jacobian of the Poincar\'e map become $\{-0.9545\pm0.0600i,-0.2454\pm0.8600i\}$ and $0.9563$, respectively.
Figure \ref{Phase_Portraits} depicts the phase portraits during $150$ consecutive steps of walking. Convergence to a periodic orbit, even in the yaw position, can be seen in the figure. The animation of this simulation can be found at \cite{QuadrupedAnimation_Youtube}.

\begin{figure}[!t]
\centering
\subfloat{\includegraphics[width=1.65in]{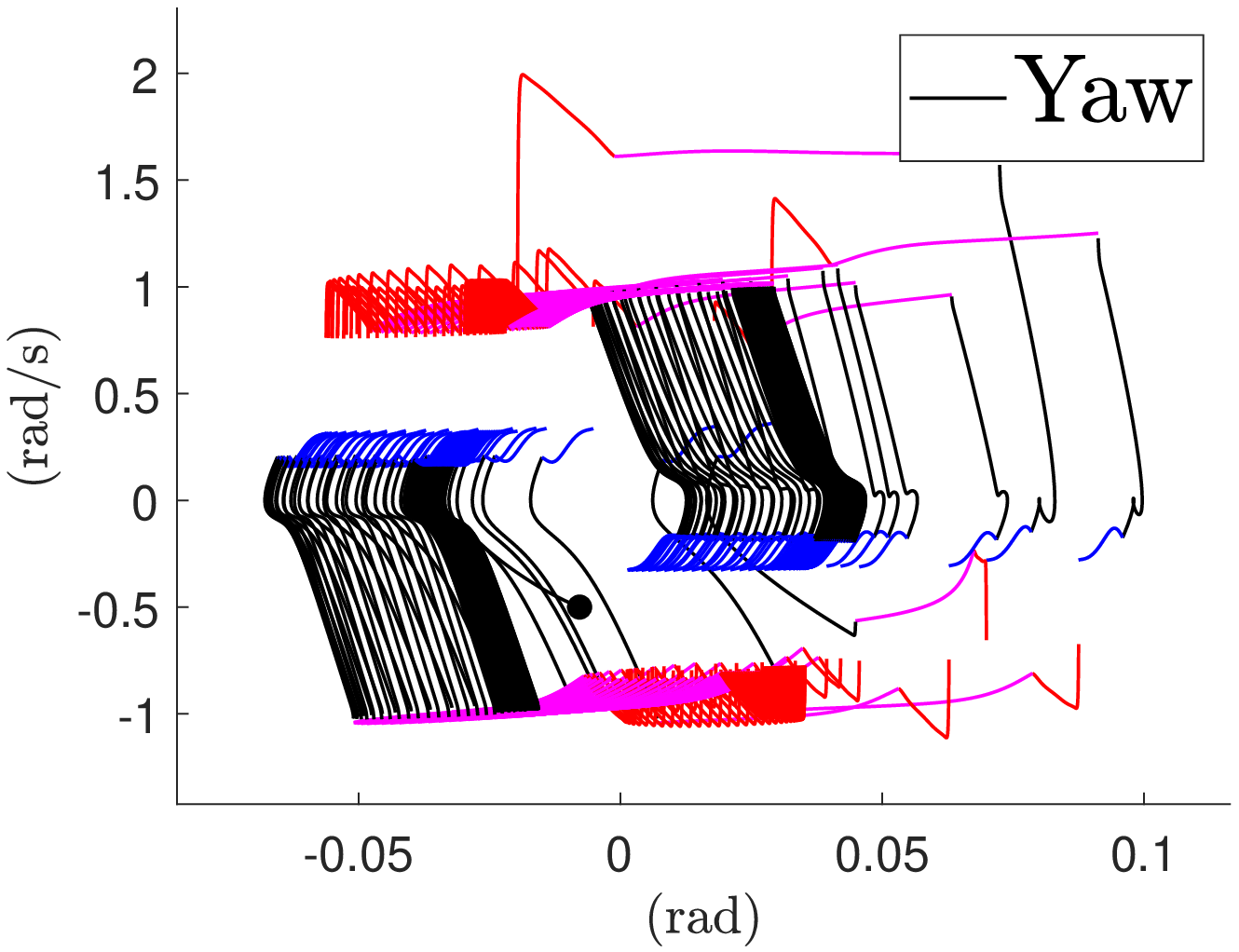}\label{yaw}}
\subfloat{\includegraphics[width=1.65in]{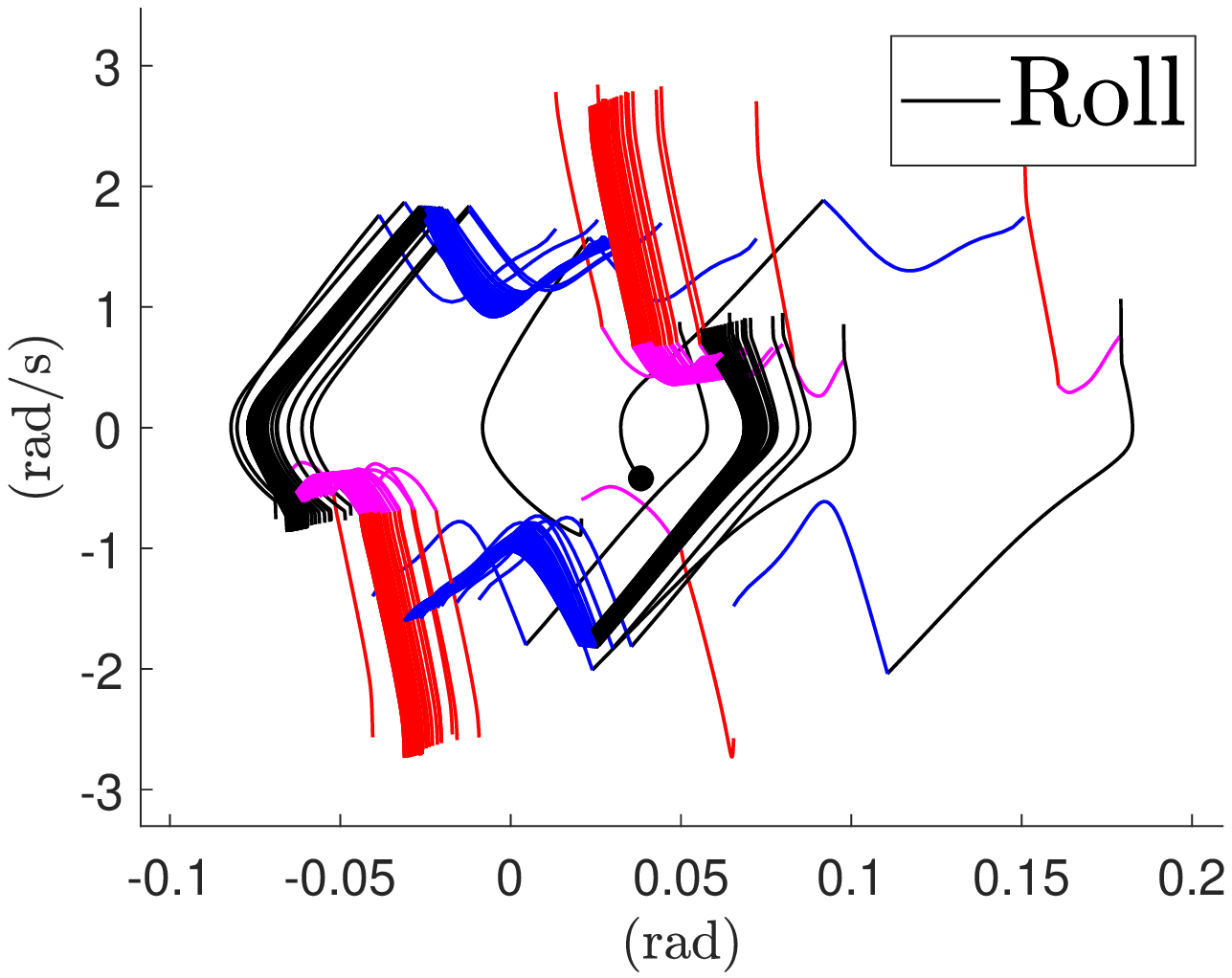}\label{roll}}\\
\vspace{-0.75em}
\subfloat{\includegraphics[width=1.65in]{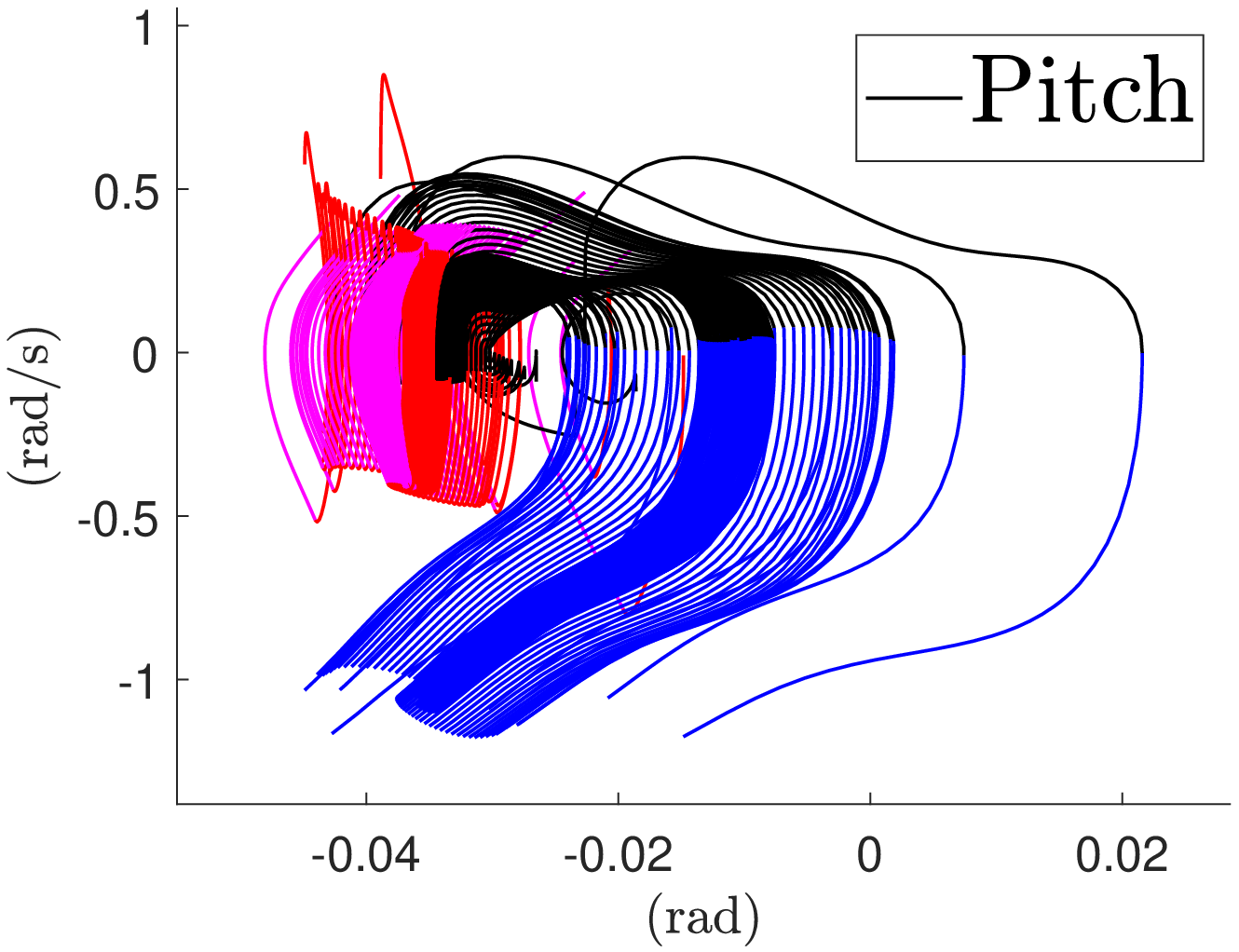}\label{pitch}}
\subfloat{\includegraphics[width=1.65in]{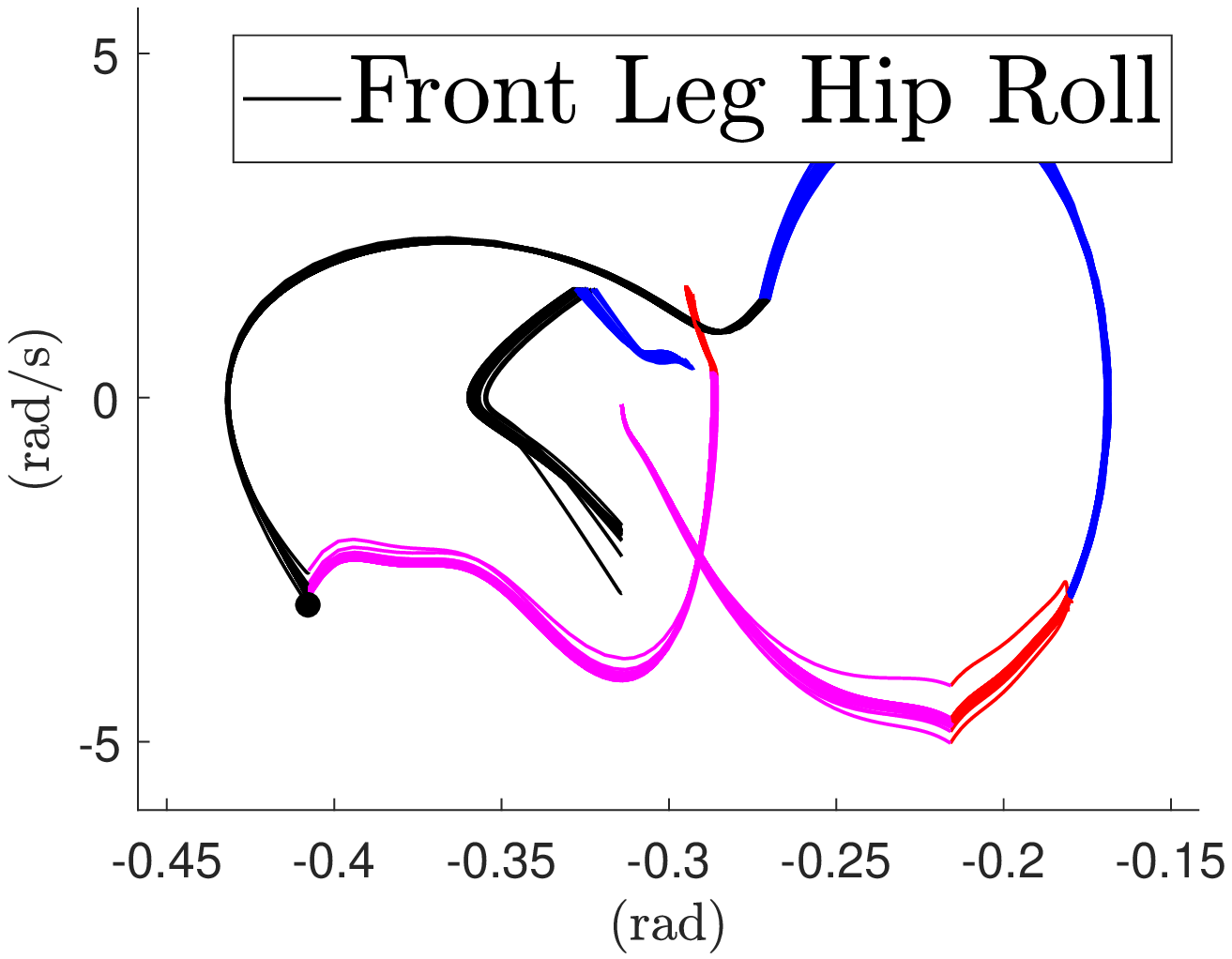}\label{fhr}}
\vspace{-0.75em}
\caption{Phase portraits during $150$ consecutive steps of $3$D quadruped walking with the BMI-optimized virtual constraint controllers.}
\label{Phase_Portraits}
\vspace{-1.5em}
\end{figure}


\vspace{-0.25em}
\section{CONCLUSIONS}
\label{Conclusions}
\vspace{-0.25em}

This paper presented an analytical foundation 1) to address multi-domain and high-dimensional hybrid models of quadruped robots, and 2) to systematically design HZD-based controllers to asymptotically stabilize dynamic gaits. We presented a family of parameterized nonlinear controllers for the single- and multi-contact domains of quadruped locomotion. The controllers zero a combination of holonomic and nonholonomic outputs. We investigated the properties of the parameterized and high-dimensional Poincar\'e map for the full-order closed-loop hybrid system. The asymptotic stabilization problem of multi-domain gaits was then translated into an iterative BMI optimization algorithm that can be effectively solved using available software packages. To demonstrate the power of the analytical framework, an optimal amble gait was designed using the FROST toolkit for the Vision $60$ robot with $36$ state variables and $12$ control inputs. To stabilize the gait, the iterative BMI algorithm was successfully employed to design a set of HZD-based controllers with $180$ controller parameters.

For future research, we will use this framework to design and experimentally implement stabilizing centralized as well as decentralized controllers for different gait patterns of quadruped locomotion and the Vision $60$ robot.





\end{document}